\def \Sph{\mathbb{S}^{n-1}}
\def \RR {\mathbb R}
\def \EE {\mathbb E}
\def \PP {\mathbb P}
\def \vphi {\varphi}
\newtheorem{theorem}{Theorem}
\newtheorem{lemma}[theorem]{Lemma}
\newtheorem{proposition}[theorem]{Proposition}
\newtheorem{corollary}[theorem]{Corollary}
\theoremstyle{definition}
\theoremstyle{remark}
\newtheorem*{remark}{Remark}
\begin{document}

\title{Bounding the norm of a log-concave vector via thin-shell estimates}
\author{Ronen Eldan and Joseph Lehec}
\date{}
\maketitle
\begin{abstract}
Chaining techniques show that if
$X$ is an isotropic log-concave random 
vector in $\mathbb{R}^n$ and $\Gamma$
is a standard Gaussian vector then
\[
\mathbb E \Vert X \Vert \leq C n^{1/4} \mathbb E \Vert \Gamma \Vert 
\]
for any norm $\Vert \cdot \Vert$, where $C$ is a universal constant. 
Using a completely different argument we establish a similar inequality
relying on the \emph{thin-shell} constant
\[ 
\sigma_n = \sup \Bigl( \sqrt{ \mathrm{var} ( |X| ) }  ; \ X \text{ isotropic and log-concave on }\mathbb R^n \Bigr) .
\] 
In particular, we show that if the
thin-shell conjecture $\sigma_n = O(1)$  holds,
then $n^{1/4}$ can be replaced by $\log (n)$ in the inequality. 
 As a consequence, we obtain certain bounds for
 the mean-width, the dual mean-width and the isotropic
 constant of an isotropic convex body. 
 In particular, we give an alternative proof of the fact that a positive answer
 to the thin-shell conjecture implies a positive answer to the slicing problem, up to a logarithmic factor.
\end{abstract}

\section{Introduction}

Given a stochastic process $(X_t)_{t \in T}$, the question of obtaining bounds for the quantity
\[
\EE \bigl( \sup_{t \in T} X_t  \bigr)
\]
is a fundamental question in probability theory dating back to Kolmogorov, and the theory behind this type of question has applications in a variety of fields. 

The case that $(X_t)_{t \in T}$ is a Gaussian process is perhaps the most important one. It has been studied intensively over the past 50 years, and numerous bounds on the supremum in terms of the geometry of the set $T$ have been attained by Dudley, Fernique, Talagrand and many others.

The case of interest in this paper is a certain generalization of the Gaussian process. We consider the supremum of the process 
\[Ê
( X_t = \langle X, t \rangle )_{t\in T}
\]
where $X$ is a log-concave random vector in $\RR^n$ and $T \subset \RR^n$ is a compact set. 
Throughout the article $\langle x , y \rangle$ denotes the inner product of $x,y \in \RR^n$ and 
$\vert x \vert = \sqrt{\langle x,x\rangle}$ the Euclidean norm of $x$.
Our aim is to obtain an upper bound on this supremum in terms of the supremum of a corresponding Gaussian process $Y_t = \langle \Gamma, t \rangle$ where $\Gamma$ is a gaussian random vector having the same covariance structure as $X$.

Before we formulate the results, we begin with some notation. A probability density $\rho: \RR^n \rightarrow [0, \infty)$
is called {\it log-concave} if it takes the form $\rho = \exp(-H)$
for a convex function $H: \RR^n \rightarrow \RR \cup \{ + \infty \}$.
A probability measure is log-concave if it has a log-concave density and a random vector taking values in $\RR^n$ 
is said to be log-concave if its law is log-concave. Two canonical examples of log-concave measures are the uniform probability measure on a convex body and the Gaussian measure. It is a well-known fact that any log-concave probability density decays
exponentially at infinity, and thus has moments of all orders. A log-concave random vector $X$
is said to be \emph{isotropic} if its expectation and covariance matrix satisfy
\[
 \EE ( X ) = 0 , \quad \mathrm{cov} ( X ) = \mathrm{id} . 
\]
Let $\sigma_n$ be the so-called \emph{thin-shell} constant:
\begin{equation}
 \sigma_n = \sup_X \sqrt{  \mathrm{var} (  \vert X \vert  ) }  \label{eq_1356}
\end{equation}
where the supremum runs over all isotropic, log-concave random vectors $X$ in $\RR^n$. 
It is trivial that $\sigma_n \leq \sqrt{n}$ and it was proven initially by Klartag~\cite{K2} 
that in fact
\[
\sigma_n = o(\sqrt{n}) . 
\]
Shortly afterwards, Fleury-Gu\'edon-Paouris~\cite{FGP} gave an alternative proof of this fact. Several improvements on the bound have been established since then,
and the current best estimate is $\sigma_n = O ( n^{1/3} )$ due to Gu\'edon-Milman~\cite{GM}.  
The \emph{thin-shell} conjecture,
which asserts that the sequence $(\sigma_n)_{n\geq 1}$ is bounded, 
is still open. 
Another related constant is:
\begin{equation} \label{defkn}
\tau_n^2 = \sup_X \sup_{\theta \in \Sph} \sum_{i,j=1}^n \EE \bigl( X_i X_j \langle X, \theta \rangle \bigr)^2,
\end{equation}
where the supremum runs over all isotropic log-concave random vectors $X$ in $\RR^n$.  
Although it is not known whether $\tau_n = O ( \sigma_n)$, we have the following estimate, proven in \cite{E1}
\begin{equation}
\label{ksigma}
\tau_n^2 = O \Bigl(  \sum_{k=1}^n \frac{\sigma_k^2} k  \Bigr).
\end{equation}
The estimate $\sigma_n = O ( n^{1/3} )$ thus gives $\tau_n = O ( n^{1/3} )$, 
whereas the \emph{thin-shell} conjecture yields $\tau_n = O ( \sqrt{ \log n } )$. 
\\ \\
We denote by $\Gamma$ the standard Gaussian vector in $\RR^n$ (with identity covariance matrix). We are now ready to formulate our main theorem. 
\begin{theorem} \label{mainthm}
Let $X$ be an isotropic log-concave random vector in $\RR^n$ and let $\Vert \cdot \Vert$ be a norm. There is a universal constant $C$ such that 
\begin{equation}
\label{main-result1}
\EE \Vert X \Vert \leq C \sqrt{\log n} \, \tau_n\,  \EE \Vert \Gamma \Vert . 
\end{equation}
\end{theorem}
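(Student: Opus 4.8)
The plan is to use a stochastic localization / martingale approach to interpolate between the Gaussian and the log-concave measure, controlling the norm along the way via the quantity $\tau_n$. More precisely, I would run the Eldan stochastic localization process associated with $X$: starting from the density of $X$, tilt it by a Brownian-driven exponential factor $\exp(\langle B_t, x\rangle - t|x|^2/2)$, obtaining at each time $t$ a random log-concave density $\rho_t$ with barycenter $a_t$ and covariance $A_t$. The key structural facts are that $a_t$ is a martingale with $da_t = A_t\,dW_t$, that $A_t$ evolves by $dA_t = \int (x-a_t)(x-a_t)^{\otimes}\langle x-a_t, dW_t\rangle\rho_t(x)\,dx - A_t^2\,dt$ (so its drift is $-A_t^2$), and that after running to a suitable time the measure collapses to (nearly) a point mass near $X$. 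The point of the construction is that $\EE\|a_t\|$ starts at $\EE\|\mathrm{barycenter}\| = 0$... rather, I would instead track $\EE\|X\|$ by writing it as the terminal value of a martingale built from $a_t$ plus the "internal" fluctuation $\EE_{\rho_t}\|x-a_t\|$, and bound the bracket of the martingale.

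Concretely, I would proceed as follows. First, fix the norm and let $f(x) = \|x\|$. Consider $M_t = \EE[f(X)\mid \mathcal F_t]$, which by the localization construction equals $\int f\,d\rho_t$; this is a martingale from $M_0 = \EE\|X\|$ down to $M_\infty = \|X\|$ — wait, that has the inequality backwards, so instead I run the comparison in the useful direction: bound $\int \|x\|\,d\rho_t$ above by $\|a_t\| + \int\|x - a_t\|\,d\rho_t$, and control the second term crudely by $(\EE_{\rho_t}|x-a_t|^2)^{1/2}$ times the "weak" norm $\sup_{|\theta|\le 1}\|\theta\|$-type quantity, i.e. by $\sqrt{\mathrm{tr}\,A_t}\cdot b$ where $b = \EE\|\Gamma\|/\sqrt n$ up to constants (here I use that for a log-concave measure with covariance $A_t$, $\EE\|x-a_t\| \le C\sqrt{\|A_t\|_{op}}\,\EE\|\Gamma\|$ is false in general, so the honest route is $\EE_{\rho_t}\|x - a_t\| \le C\,\EE\|A_t^{1/2}\Gamma\|$, which holds because log-concave measures are dominated in this sense). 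Then $a_t$ is a martingale with $\EE\|a_T\| \le \EE\|a_0\| + (\text{quadratic variation term})$; using $\EE\|\sum \xi_i e_i\|$-type symmetrization and the covariance $\langle da_t\rangle = A_t^2\,dt$, the increment contributes something like $\EE\int_0^T \|A_t^{1/2}(A_t^{1/2}\,dW_t)\|$, whose expected size over a window of length $T \sim \log n$ is governed exactly by $\sup_\theta \sum_{i,j}\EE(X_iX_j\langle X,\theta\rangle)^2 = \tau_n^2$ — this is where the definition \eqref{defkn} enters.

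The main technical steps, then, are: (1) set up the localization and record the SDEs for $a_t, A_t$; (2) choose the stopping time $T = c\log n$ and show that up to this time $\|A_t\|_{op} = O(1)$ with high probability (this uses that $-A_t^2$ is the drift, giving an a priori spectral bound, together with a union bound over a net — this is essentially the argument already used to establish thin-shell-type estimates, and it is the part where $\log n$ is unavoidable); (3) write $\EE\|X\| \le \EE\|a_T\| + \EE\int\|x-a_T\|\,d\rho_T$ and bound the residual term after time $T$ using the operator-norm control; (4) bound $\EE\|a_T\|$ by $\EE\|\Gamma\|$ plus the martingale increment, and estimate that increment's $L^1$ norm by $\sqrt{T}$ times a quantity that, after a Gaussian comparison (Jensen / contraction to replace the measure $\rho_t$ by a Gaussian with the same covariance), is bounded by $\tau_n\,\EE\|\Gamma\|$. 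Assembling (2)–(4) gives $\EE\|X\| \le C\sqrt{\log n}\,\tau_n\,\EE\|\Gamma\|$.

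The hard part will be step (2) together with the precise form of the martingale estimate in step (4): one must show that the "norm of the covariance square-root applied to the Brownian increment" is, in expectation and integrated over $[0,T]$, controlled by $\tau_n$ rather than by something like $\mathrm{tr}(A_t)^{1/2}$ (which would only give the trivial $n^{1/4}$ bound and lose the whole point). This requires exploiting the third-moment structure $\EE(X_iX_j\langle X,\theta\rangle)$ appearing in $dA_t$ and matching it against \eqref{defkn}, and then using \eqref{ksigma} only at the very end to translate into thin-shell language. Controlling the operator norm of $A_t$ uniformly on $[0,T]$ while $T$ grows logarithmically is the other delicate point, since the naive Grönwall-type bound degrades and one needs the stochastic term's fluctuations to be tamed by a chaining/net argument at scale depending on $n$.
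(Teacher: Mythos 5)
Your high-level strategy — run Eldan's stochastic localization, view $a_t$ as a martingale, control its quadratic variation via the operator norm of $A_t$, and compare the terminal value to a Gaussian — is indeed the one the paper uses. But the execution you sketch has several gaps and differs from the paper in ways that matter.

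First, the Gaussian comparison. Your step (4) says to ``bound $\EE\|a_T\|$ by $\EE\|\Gamma\|$ plus the martingale increment, and estimate that increment's $L^1$ norm by $\sqrt T$ times\ldots''. As written this does not parse: $a_0=0$, so there is no baseline $\EE\|\Gamma\|$ to add the increment to, and in any case a direct $L^1$ bound on the increment with a $\sqrt{T}$ factor would only be a Burkholder--Davis--Gundy-type estimate in the Euclidean norm, not in the given norm $\|\cdot\|$. The tool that makes this step clean is a comparison principle for $\RR^n$-valued martingales: if $M_0=0$ and $[M]_t\preceq\mathrm{id}$ a.s.\ for all $t$, then $\EE\varphi(M_\infty)\le\EE\varphi(\Gamma)$ for every convex $\varphi$. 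This is Proposition~\ref{Maurey} in the paper (due to Maurey), proved by the elegant device of adding an independent Gaussian $(\mathrm{id}-[M]_\infty)^{1/2}X$ to $M_\infty$ to exactly reconstruct a standard Gaussian, then using symmetry and Jensen. Your proposal does not contain this ingredient, and without it the comparison to $\EE\|\Gamma\|$ is not justified.

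Second, the choice of $T$. You take a deterministic $T=c\log n$ and try to show $\|A_t\|_{op}=O(1)$ uniformly on $[0,T]$ w.h.p.\ via a net/union bound. The paper sidesteps this: it takes $T$ to be the \emph{random} stopping time at which $\int_0^t A_s\,ds$ exceeds $C^2\tau_n^2\log n\,\|A_0\|_{op}\,\mathrm{id}$. This makes the quadratic-variation bound for the stopped martingale $a^T$ hold \emph{deterministically}, so Maurey's proposition applies immediately to $a_T$. The event $\{T<\infty\}$ is then shown to have small probability via Proposition~\ref{main-tool} ($\EE\|A_t\|_{op}\le C_0\|A_0\|_{op}\tau_n^2(\log n)e^{-t}$), integrated in $t$ and Markov's inequality, and is absorbed using Borell's lemma (Lemma~\ref{conclemma}). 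Crucially, on $\{T=\infty\}$ one has $a_T=a_\infty$, which has the law of $X$ (Proposition~\ref{samedist}); this means there is \emph{no residual term} $\EE\int\|x-a_T\|\,d\rho_T$ to deal with at all. Your step (3) introduces this residual and has to bound it separately — a genuine complication you create for yourself, and one that would in turn require yet another Gaussian domination at time $T$.

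Third, and most substantively, the operator-norm control. You acknowledge that proving $\EE\|A_t\|_{op}\lesssim\tau_n^2(\log n)e^{-t}$ (``step (2)'') is the hard part and sketch a re-derivation via the $dA_t$ SDE, the third-moment tensor, and a net argument. This is exactly the content of Section~3 of~\cite{E1}, and in the paper it is imported wholesale as Proposition~\ref{main-tool}. As it stands, your proposal leaves this as a sketch, and it is far from routine: the $\tau_n^2$ dependence is precisely what makes the theorem nontrivial, and the $\log n$ comes from the net over the sphere in that argument. You should either cite~\cite{E1} for it or accept that this is the bulk of the work. Finally, note that your SDEs $da_t=A_t\,dW_t$, $dA_t=(\cdots)-A_t^2\,dt$ correspond to the other standard normalization of the localization; the paper uses $da_t=A_t^{1/2}dW_t$, $dA_t=(\cdots)-A_t\,dt$, so that $[a]_t=\int_0^t A_s\,ds$ and $\EE\,\mathrm{tr}(A_t)=e^{-t}\mathrm{tr}(A_0)$, which meshes with Proposition~\ref{main-tool} as stated. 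Either normalization can work, but the bookkeeping must be consistent.
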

\begin{remark}
It is well-known that an isotropic random vector satisfies the following $\psi_2$ estimate
\[
\PP  \bigl ( \vert \langle X , \theta \rangle \vert \geq t ) \leq C e^{ - c t^2 / \sqrt{n} } , 
\quad \forall t \geq 0 , \, \forall \theta \in \Sph ,
\]
where $C,c$ are universal constants. 
Combining this with chaining methods developed by Dudley-Fernique-Talagrand 
(more precisely, using Theorem~1.2.6. and Theorem~2.1.1. of~\cite{T}), one gets the inequality
$$
\EE  \Vert X \Vert  \leq C' n^{1/4} \EE  \Vert \Gamma \Vert  ,
$$
we refer to~\cite{Bou} for more details. 
This means that using the current best-known bound for the thin-shell constant: $\sigma_n = O ( n^{1/3} )$, the above theorem does not give us anything new.
\\
On the other hand, under the thin-shell hypothesis we obtain using~\eqref{ksigma}
\[
\EE \Vert X \Vert \leq C  \log n \,   \EE \Vert \Gamma \Vert . 
\]
\end{remark}
As an application of Theorem \ref{mainthm}, we derive several bounds related to the mean width and dual mean width of isotropic convex bodies and to the so-called {\em hyperplane conjecture}. We begin with a few definitions.
A convex body $K \subset \RR^n$ is a compact convex set whose interior contains the origin. For $x \in \RR^n$, we define
\[
\Vert x \Vert_K = \inf \{\lambda; ~ x \in \lambda K \}
\]
to be the gauge associated to $K$ (it is a norm if $K$ is symmetric about $0$). 
The polar body of $K$ is denoted by 
\[
K^\circ = \{ y \in \RR^n ; \ \langle x , y \rangle \leq 1 , \, \forall x \in K \} . 
\]
Next we define
\[
\begin{split}
M(K) & = \int_{\Sph} \Vert x \Vert_K \, \sigma ( dx) , \\
M^*(K) & = \int_{\Sph} \Vert x \Vert_{K^\circ} \, \sigma (dx) ,
\end{split}
\]
where $\sigma$ is the Haar measure on the sphere, normalized to be a probability measure. 
These two parameters play an important r\^{o}le in the asymptotic theory of convex bodies. \\
A convex body $K$ is said to be isotropic if a random vector uniform on $K$ is isotropic. 
When $K$ is isotropic, the \emph{isotropic constant} of $K$ is then defined to be 
\[
L_K = \vert K \vert^{-1/n} ,
\]
where $\vert K \vert$ denotes the Lebesgue measure of $K$. 
More generally, the isotropic constant of an isotropic log-concave 
random vector is $L_X = f ( 0)^{1/n}$ where $f$ is the density of $X$. 
The {\em slicing} or {\em hyperplane} conjecture asserts that $L_K \leq C$ 
for some universal constant $C$. The current best estimate is 
$L_K \leq C n^{1/4}$ due to Klartag~\cite{K1}. 
We are ready to formulate our corollary:
\begin{corollary} \label{corslicing}
Let $K$ be an isotropic convex body. Then one has,
\begin{itemize}
\item[(i)] $M(K) \geq c / ( \sqrt{n \log n}\,  \tau_n)$,
\item[(ii)] $M^*(K) \geq c  \sqrt{n} / ( \sqrt{ \log n}  \, \tau_n )$,
\item[(iii)] $L_K \leq C \tau_n (\log n)^{3/2}$,
\end{itemize}
where $c,C>0$ are universal constants.
\end{corollary}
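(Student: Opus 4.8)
\section*{Proof proposal for Corollary~\ref{corslicing}}

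The plan is to apply Theorem~\ref{mainthm} to the random vector $X$ distributed uniformly on $K$, choosing the norm appropriately in each case, and then — for (iii) — to combine the resulting estimate with a classical reverse‑Urysohn‑type inequality valid in the isotropic position. I will repeatedly use that $\Gamma = |\Gamma|\,\Theta$ with $|\Gamma|$ and $\Theta := \Gamma/|\Gamma|$ independent, $\Theta$ uniform on $\Sph$, and $\EE|\Gamma|\le\sqrt n$; hence $\EE\Vert\Gamma\Vert = \EE|\Gamma|\int_{\Sph}\Vert u\Vert\,\sigma(du)$ for every norm, so in particular $\EE\Vert\Gamma\Vert_K\le\sqrt n\,M(K)$ and $\EE\Vert\Gamma\Vert_{K^\circ}\le\sqrt n\,M^*(K)$, where $\Vert\cdot\Vert_{K^\circ}$ is the support function of $K$. (When $K$ is not symmetric, $\Vert\cdot\Vert_K$ and $\Vert\cdot\Vert_{K^\circ}$ are only gauges; one then runs the argument with the symmetric bodies $K\cap(-K)$ and $\mathrm{conv}(K\cup(-K))$, which costs only absolute constants since $M(K\cap(-K))\le 2M(K)$ and $M^*(\mathrm{conv}(K\cup(-K)))\le 2M^*(K)$. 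I suppress this point below.)

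For (i), note that since $X$ is supported in $K$ we have $\PP(\Vert X\Vert_K\ge\tfrac12) = 1-2^{-n}\ge\tfrac12$, hence $\EE\Vert X\Vert_K\ge\tfrac12\,\PP(\Vert X\Vert_K\ge\tfrac12)\ge\tfrac14$. Applying Theorem~\ref{mainthm} with $\Vert\cdot\Vert = \Vert\cdot\Vert_K$ gives $\tfrac14\le C\sqrt{\log n}\,\tau_n\sqrt n\,M(K)$, which is (i). For (ii), apply Theorem~\ref{mainthm} with $\Vert\cdot\Vert = \Vert\cdot\Vert_{K^\circ}$: since $X\in K$ almost surely, $\Vert X\Vert_{K^\circ} = \sup_{y\in K}\langle X,y\rangle\ge\langle X,X\rangle = |X|^2$, so $\EE\Vert X\Vert_{K^\circ}\ge\EE|X|^2 = \mathrm{tr}(\mathrm{cov}(X)) = n$; the theorem then gives $n\le C\sqrt{\log n}\,\tau_n\sqrt n\,M^*(K)$, which is (ii).

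For (iii), recall two classical facts about an isotropic convex body $K$. First, $L_K\asymp\sqrt n/\mathrm{vrad}(K)$, where $\mathrm{vrad}(K) = (|K|/|B_2^n|)^{1/n}$ and $B_2^n$ is the Euclidean unit ball; this is just Stirling's formula, which gives $|B_2^n|^{1/n}\asymp 1/\sqrt n$. Second, a reverse Urysohn inequality holds in the isotropic position with a logarithmic loss, namely $M^*(K)\le C(\log n)\,\mathrm{vrad}(K)$, equivalently $L_K\,M^*(K)\le C\sqrt n\log n$; this follows from a Dudley‑type chaining bound for the Gaussian process $x\mapsto\langle x,\Gamma\rangle$ on $K$, using that an isotropic body has well‑controlled Euclidean covering numbers. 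Combining this with the lower bound (ii) on $M^*(K)$ yields
\[
L_K \;\le\; \frac{C\sqrt n\log n}{M^*(K)} \;\le\; \frac{C\sqrt n\log n}{c\sqrt n/(\sqrt{\log n}\,\tau_n)} \;=\; C'\,\tau_n\,(\log n)^{3/2},
\]
which is (iii).

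Parts (i) and (ii) are essentially immediate once the correct norm is chosen, the only inputs being the trivial one‑sided bounds $\EE\Vert X\Vert_K\ge\tfrac14$ and $\EE\Vert X\Vert_{K^\circ}\ge\EE|X|^2 = n$ together with the identity $\EE\Vert\Gamma\Vert = \EE|\Gamma|\int_{\Sph}\Vert u\Vert\,\sigma(du)$. The real work is in (iii): Theorem~\ref{mainthm} only ever produces \emph{lower} bounds on mean widths, so to bound $L_K$ from above one is forced to pass through a lower bound on $\mathrm{vrad}(K)$, i.e.\ through a reverse Urysohn inequality; I expect the main obstacle to be establishing that inequality with the right logarithmic dependence, and with a right‑hand side that does not covertly reintroduce $L_K$. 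A secondary, purely bookkeeping, issue is the symmetrization needed to apply the theorem — which is stated for norms — to a general, possibly non‑symmetric, body $K$.
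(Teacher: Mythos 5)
Parts (i) and (ii) of your proposal are correct and coincide with the paper's argument: you restate Theorem~\ref{mainthm} in the form $\EE\Vert X\Vert_K \leq C\sqrt{n\log n}\,\tau_n\,M(K)$ using $\EE\Vert\Gamma\Vert = \EE\vert\Gamma\vert\cdot\EE\Vert\Theta\Vert$ with $\EE\vert\Gamma\vert\asymp\sqrt n$, and then feed in the trivial lower bounds $\EE\Vert X\Vert_K\geq\tfrac14$ and $\EE\Vert X\Vert_{K^\circ}\geq\EE\vert X\vert^2=n$. That is exactly what the paper does.

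Part (iii), however, has a genuine gap, and you half-sense it yourself. You invoke a ``reverse Urysohn inequality in the isotropic position,'' namely $M^*(K)\leq C(\log n)\,\mathrm{vrad}(K)$, and suggest it follows from a Dudley bound. This is not a known result, and your justification does not support it: the covering numbers of an isotropic convex body are not ``well-controlled'' in the sense needed — an isotropic body can have diameter of order $nL_K$ and there is no general mechanism forcing the Dudley integral down to $\sqrt n\log n\,/L_K$. The $MM^*\lesssim\log n$ estimate (Pisier's $K$-convexity bound combined with Figiel--Tomczak-Jaegermann) holds only after passing to the $\ell$-position, i.e.\ after applying a suitable volume-preserving linear map $T$; it is not available in isotropic position. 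In fact, whether the mean width in isotropic position is within a polylogarithmic factor of the mean width in $\ell$-position is itself open, so your Step 2 cannot be taken for granted.

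The paper circumvents this precisely by not staying in isotropic position. The key observation is that the lower bound you used for (ii) survives an arbitrary positive determinant-one linear map $T$: since $X\in K$ a.s., one has $\Vert X\Vert_{(TK)^\circ}\geq\langle X,TX\rangle$, hence $\EE\Vert X\Vert_{(TK)^\circ}\geq\EE\langle X,TX\rangle=\mathrm{tr}(T)\geq n$ by AM--GM. Applying the restated Theorem~\ref{mainthm} to $(TK)^\circ$ therefore gives $M^*(TK)\geq c\sqrt n/(\sqrt{\log n}\,\tau_n)$ for \emph{every} such $T$, in particular for the $T$ placing $K$ in $\ell$-position. There one does have $M(TK)M^*(TK)\leq C\log n$, which together with Urysohn applied to $M(TK)$ yields $M^*(TK)\leq C\vert K\vert^{1/n}\sqrt n\log n$, and combining the two bounds gives (iii). (Non-symmetry is handled via $M^*(K)\leq M^*(K-K)$ and Rogers--Shephard, rather than your $K\cap(-K)$/$\mathrm{conv}(K\cup(-K))$ pair; this is a minor difference.) So the missing idea in your (iii) is exactly this freedom to choose $T$: the theorem's lower bound is position-insensitive, and you should exploit that to move to the position where the reverse Urysohn actually holds, instead of asserting it in isotropic position.
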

\begin{remark}
Part (iii) of the corollary is nothing new. Indeed, in \cite{EK2}, it is shown that $L_K \leq C \sigma_n$ for a universal constant $C>0$. Our proof uses different methods and could therefore shed some more light on this relation, which is the reason why we provide it.
\end{remark}
Using similar methods, we attain an alternative proof of the following correlation inequality proven initially by Harg\'{e} in \cite{harge}.
\begin{proposition}[Harg\'{e}]
\label{harge}
Let $X$ be a random vector on $\RR^n$.  
Assume that $\EE(X)=0$ and that $X$ is more log-concave than $\Gamma$,
i.e. the density of $X$ has the form 
\[ x\mapsto \exp \bigl( -V (x) - \frac 1 2  \vert x \vert^2  \bigr) 
\]
for some convex function $V \colon \RR^n \to  (-\infty , +\infty]$. 
Then for every convex function $\varphi \colon \RR^n \to \RR$
we have 
\[
\EE  \varphi (X)  \leq \EE \varphi ( \Gamma ) . 
\]
\end{proposition}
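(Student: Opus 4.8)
The plan is to represent $X$ as the terminal value of a diffusion whose infinitesimal covariance is everywhere a contraction, and then to compare with $\Gamma$ by running the heat semigroup along this diffusion. Let $(B_t)_{t\in[0,1]}$ be a standard Brownian motion in $\RR^n$ with $B_0=0$ (so $B_1\sim\Gamma$), write the law of $X$ as $f\cdot\gamma_n$, where $\gamma_n$ is the standard Gaussian measure and $f$ is proportional to $e^{-V}$ --- by hypothesis a log-concave probability density with respect to $\gamma_n$ --- and let $P_s$ denote the heat semigroup, $P_sg(x)=\EE[g(x+\sqrt{s}\,\Gamma)]$. Consider the Föllmer process
\[
dX_t=\nabla\log P_{1-t}f(X_t)\,dt+dB_t,\qquad X_0=0,
\]
which satisfies $X_1\sim X$. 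Writing $u(t,x)=\log P_{1-t}f(x)$, the function $u$ solves the Hamilton--Jacobi--Bellman equation $\partial_t u+\tfrac12\Delta u+\tfrac12|\nabla u|^2=0$; differentiating this identity in $x$ and applying Itô's formula shows that $v_t:=\nabla u(t,X_t)$ is a martingale with $dv_t=A_t\,dB_t$, where $A_t:=\nabla^2 u(t,X_t)$. Moreover $A_t$ obeys the two-sided bound $-\tfrac1{1-t}\,\mathrm{id}\preceq A_t\preceq 0$: the upper bound is the log-concavity of $P_{1-t}f$ (Prékopa--Leindler, using convexity of $V$), and the lower bound follows by writing $P_{1-t}f(x)=e^{-|x|^2/(2(1-t))}g(x)$ with $g$ a positively weighted superposition of the log-linear maps $x\mapsto e^{\langle x,y\rangle/(1-t)}$, hence log-convex.

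Since $v_0=\nabla u(0,0)$ is deterministic we have $v_t=v_0+\int_0^tA_s\,dB_s$, and a stochastic Fubini argument gives
\[
X_1=B_1+\int_0^1 v_t\,dt=v_0+\int_0^1\bigl(\mathrm{id}+(1-s)A_s\bigr)dB_s=:v_0+\int_0^1 C_s\,dB_s .
\]
By the Hessian bounds, $-\mathrm{id}\preceq(1-s)A_s\preceq 0$, so $C_s$ is symmetric with $0\preceq C_s\preceq\mathrm{id}$ (in particular $C_s^2\preceq\mathrm{id}$); since $\int_0^1 C_s\,dB_s$ is then an $L^2$ martingale with zero mean, taking expectations and using $\EE X=0$ forces $v_0=0$, so $X\overset{d}{=}\int_0^1 C_s\,dB_s$.

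It remains to show $\EE\varphi\bigl(\int_0^1 C_s\,dB_s\bigr)\le\EE\varphi(\Gamma)$. We may assume $\varphi$ smooth with bounded derivatives of every order $\geq 1$; the general convex case follows by inf-convolution, mollification, and monotone/dominated convergence. Put $h_t:=P_{1-t}\varphi$, so that $h_t$ is convex (the heat semigroup preserves convexity), $h_1=\varphi$, $h_0(0)=\EE\varphi(\Gamma)$, and $\partial_t h_t+\tfrac12\Delta h_t=0$. Applying Itô's formula to $h_t(X_t)$ along $dX_t=C_t\,dB_t$ gives
\[
dh_t(X_t)=\tfrac12\bigl\langle\nabla^2 h_t(X_t),\,C_t^2-\mathrm{id}\bigr\rangle\,dt+\langle\nabla h_t(X_t),\,C_t\,dB_t\rangle ,
\]
and since $\nabla^2 h_t\succeq 0$ while $\mathrm{id}-C_t^2\succeq 0$, the drift term is nonpositive; hence $h_t(X_t)$ is a supermartingale. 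Therefore $\EE\varphi(X)=\EE h_1(X_1)\le h_0(X_0)=\EE\varphi(\Gamma)$, as claimed. The only real difficulty is technical: establishing existence and moment bounds for the Föllmer process when $f$ may equal $+\infty$ off a convex set, checking that $v$ is a genuine martingale (used both for $v_0=\EE X$ and for the Fubini step), and carrying out the approximation of a general convex $\varphi$; each of these is standard but requires some care.
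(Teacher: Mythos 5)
Your proof is correct, but it takes a genuinely different route from the paper's. The paper runs Eldan's stochastic localization process $(\mu_t)_{t\geq 0}$ started at the law of $X$, uses the Brascamp--Lieb-type Lemma~\ref{bl} (via Lemma~\ref{bl-app}) to show $\Vert A_t \Vert_{op} \leq e^{-t}$ pointwise, so that the barycenter martingale $a_t$ has $[a]_\infty = \int_0^\infty A_s\,ds \preceq \mathrm{id}$, and then invokes Maurey's Proposition~\ref{Maurey}, which completes $a_\infty$ to a standard Gaussian by adding independent noise $\pm(\mathrm{id}-[a]_\infty)^{1/2}\Gamma'$ and averages. You instead use the F\"ollmer process on $[0,1]$, derive the two-sided Hessian bound $-\tfrac1{1-t}\mathrm{id}\preceq \nabla^2\log P_{1-t}f\preceq 0$ (Pr\'ekopa for the upper bound, a nice log-convexity observation for the lower), obtain the explicit Volterra representation $X \overset{d}{=}\int_0^1 C_s\,dB_s$ with $0\preceq C_s\preceq\mathrm{id}$, and finish by showing $t\mapsto P_{1-t}\varphi(Y_t)$ is a supermartingale. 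The structure is parallel --- both reduce the problem to a martingale with quadratic variation dominated by $\mathrm{id}$ --- but the coupling and the final comparison step differ. The paper's version reuses machinery already built for Theorem~\ref{mainthm}, while yours is self-contained and uses only the heat semigroup; note that your route genuinely needs both Hessian bounds (the lower one to ensure $C_s\succeq 0$), whereas the paper's Lemma~\ref{bl-app} gives a one-sided operator-norm estimate directly. Your caloric-supermartingale finish could also be replaced by a direct appeal to Proposition~\ref{Maurey} applied to the martingale $Y_t=\int_0^t C_s\,dB_s$ frozen after time $1$, since $[Y]_1=\int_0^1 C_s^2\,ds\preceq\mathrm{id}$. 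The technical points you flag at the end (existence/integrability of the F\"ollmer drift for densities vanishing outside a convex set, and the approximation of general convex $\varphi$) are real but standard; the paper sidesteps the first by an explicit truncation to compact support before running its process.
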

The structure of the paper is as follows: in section 2 we recall some properties of a stochastic process constructed in~\cite{E1}, which will serve as one of the central ingredients in the proof of Theorem~\ref{mainthm}, as well as establish some new facts about this process. In section 3 we prove the main theorem and Proposition~\ref{harge}. Finally, in section 4 we prove Corollary~\ref{corslicing}.
\\ \\
In this note, the letters $c,
\tilde{c}, c^{\prime}, C, \tilde{C}, C^{\prime}, C''$ will denote positive
universal constants, whose value is not necessarily the same in
different appearances. Further notation used throughout
the text: $\mathrm{id}$ will denote the identity $n \times n$ matrix. The Euclidean unit sphere is denoted by $\Sph = \{ x
\in \RR^n ; \vert x \vert = 1 \}$. The operator norm and the trace of a matrix $A$ are denoted by $\Vert A \Vert_{op}$
and $\mathrm{tr}(A)$, respectively. For two probability measures $\mu$, $\nu$ on $\RR^n$, we let 
$T_2 ( \mu , \nu)$ be their \emph{transportation cost} for the Euclidean distance squared:
$$
T_2(\mu, \nu) = \inf_\xi \int_{\RR^n \times \RR^n} \vert x - y \vert^2 \, \xi( dx, dy)
$$
where the infimum is taken over all measures $\xi$ on $\RR^{2n}$ whose marginals 
onto the first and last $n$ coordinates are the measures $\mu$ and $\nu$ respectively. 
Finally, given a continuous martingale $(X_t)_{t\geq 0}$, 
we denote by $[X]_t$ its quadratic variation. If $X$ is $\RR^n$ valued, then $[X]_t$ is a non-negative 
matrix whose $i,j$ coefficient is the quadratic covariation
of the  $i$-th and $j$-th coordinates of $X$ at time $t$. 
\\ \\ 
\emph{Acknowlegements}.
The authors wish to thank Bo'az Klartag for a fruitful discussion and Bernard Maurey for allowing them to use an unpublished result of his. 

\section{The stochastic construction}

We make use of the construction described in \cite{E1}. 
There it is shown that, given a probability measure $\mu$ 
having compact support and whose density with respect to the Lebesgue measure is $f$, 
and given a standard Brownian motion $(W_t)_{t\geq 0}$ on $\RR^n$;
there exists an adapted random process $(\mu_t )_{t \geq 0}$ taking 
values in the space of absolutely continuous probability measures
such that $\mu_0 = \mu$ and such that the density $f_t$ of $\mu_t$ satisfies
\begin{equation}
\label{diff-ft}
d f_t ( x)   =    f_t (x) \langle A_t^{-1/2} (x - a_t) , d W_t \rangle  , \quad \forall t \geq 0 , 
\end{equation}
for every $x\in\RR^n$, where
\[
\begin{split}
a_t & = \int_{\RR^n} x \, \mu_t ( dx ), \\
A_t & = \int_{\RR^n} (x - a_t) \otimes (x - a_t)  \, \mu_t( dx )
\end{split}
\]
are the barycenter and the covariance matrix of $\mu_t$, respectively. 

Let us give now the main properties of this process. Some of these properties have already been established in \cite{E1}, in this case we will only give the general idea of the proof. We refer the reader to \cite[Section 2,3]{E1} for complete proofs.
Firstly, for every test function $\phi$
the process 
\[
\Bigl( \int_{\RR^n} \phi \, d \mu_t \Bigr)_{t\geq 0}
\]
is a martingale. In particular
\begin{equation}
\label{average}
\EE \int_{\RR^n} \phi \, d \mu_t  = \int_{\RR^n} \phi \, d\mu , \quad \forall t \geq 0 .
\end{equation}
The It\^o differentials of $a_t$ and $A_t$ read
\begin{align} 
\label{diff-at}
d a_t & = A_t^{1/2} d W_t \\
\label{diff-At}
d A_t &= - A_t \, dt + \int_{\RR^n} (x-a_t)\otimes (x-a_t)  \langle A_t^{-1/2} (x-a_t) , d W_t \rangle \, \mu_t ( dx ) .  
\end{align}
It follows from the second equation that 
\[
\frac d {dt} \EE \mathrm{tr} ( A_t ) = - \EE \mathrm{tr} ( A_t ) . 
\]
Integrating this differential equation we obtain
\begin{equation}
\label{elem-At}
\EE \mathrm{tr} ( A_t ) = e^{- t } \mathrm{tr} ( A_0 )  , \quad t \geq 0 .  
\end{equation}
Combining this with~\eqref{diff-at} we obtain
\[
\EE \vert a_t \vert^2 = \vert a_0 \vert^2 + \int_0^t \EE \mathrm{tr} (A_s ) \, ds = \vert a_0 \vert^2 + ( 1- e^{-t} ) \mathrm{tr} ( A_0 ) . 
\]
The process $(a_t)_{t \geq 0}$ is thus a martingale bounded in $L^2$. By Doob's theorem, 
it converges almost surely and in $L^2$ to some random vector $a_\infty$. 
\begin{proposition} 
\label{samedist}
The random vector $a_\infty$ has law $\mu$.
\end{proposition}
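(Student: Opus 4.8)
The plan is to show that the random measures $\mu_t$ converge weakly, almost surely, to the Dirac mass at $a_\infty$; the proposition then follows quickly. Indeed, for a test function $\phi$ (bounded and uniformly continuous), weak convergence gives $\int_{\RR^n}\phi\,d\mu_t\to\phi(a_\infty)$ almost surely, while the martingale property recalled above together with \eqref{average} gives $\EE\int_{\RR^n}\phi\,d\mu_t=\int_{\RR^n}\phi\,d\mu$ for every $t\ge 0$. Since $\vert\int_{\RR^n}\phi\,d\mu_t\vert\le\Vert\phi\Vert_\infty$, bounded convergence lets us pass to the limit, yielding $\EE\,\phi(a_\infty)=\int_{\RR^n}\phi\,d\mu$. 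As this holds for all test functions, the law of $a_\infty$ is $\mu$.

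The heart of the argument is thus to prove that $\mu_t$ becomes concentrated, concretely that $A_t\to 0$ almost surely as $t\to\infty$. Taking the trace in \eqref{diff-At} gives $d\,\mathrm{tr}(A_t)=-\mathrm{tr}(A_t)\,dt+dM_t$ for a local martingale $(M_t)$, hence by It\^o's formula $d\bigl(e^t\,\mathrm{tr}(A_t)\bigr)=e^t\,dM_t$. Therefore $e^t\,\mathrm{tr}(A_t)$ is a non-negative local martingale, so a supermartingale, so it converges almost surely to a finite limit. Multiplying back by $e^{-t}$ shows $\mathrm{tr}(A_t)\to 0$ almost surely, and since $A_t$ is positive semidefinite this forces $A_t\to 0$.

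It remains to deduce $\mu_t\to\delta_{a_\infty}$ weakly from $A_t\to 0$ together with the already established almost sure convergence $a_t\to a_\infty$. By Chebyshev's inequality, $\mu_t\bigl(\{x:\vert x-a_t\vert>\eps\}\bigr)\le\mathrm{tr}(A_t)/\eps^2$ for every $\eps>0$. Work on the almost sure event where $A_t\to 0$ and $a_t\to a_\infty$; there the trajectory $\{a_t\}_{t\ge 0}$ is bounded, and for a bounded uniformly continuous $\phi$ we split $\int_{\RR^n}\bigl(\phi(x)-\phi(a_t)\bigr)\,\mu_t(dx)$ over $\{\vert x-a_t\vert\le\eps\}$ and its complement: the first piece is at most the oscillation of $\phi$ at scale $\eps$, the second at most $2\Vert\phi\Vert_\infty\,\mathrm{tr}(A_t)/\eps^2$. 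Letting $t\to\infty$ and then $\eps\to 0$ gives $\int_{\RR^n}\phi\,d\mu_t\to\phi(a_\infty)$, as needed.

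The main obstacle is precisely the almost sure convergence $A_t\to 0$: the a priori estimate $\EE\,\mathrm{tr}(A_t)=e^{-t}\,\mathrm{tr}(A_0)$ from \eqref{elem-At} only gives convergence in $L^1$, hence almost surely merely along a subsequence, which does not suffice to identify the pathwise weak limit of $\mu_t$; the supermartingale trick is what upgrades this to genuine almost sure convergence. A secondary point is to check that the stochastic calculus is legitimate---that $(M_t)$ really is a local martingale despite the factor $A_t^{-1/2}$ in \eqref{diff-At}---but on any finite time horizon $A_t$ stays uniformly positive definite and, as in \cite{E1}, no difficulty arises.
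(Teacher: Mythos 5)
Your proof is correct, and it takes a genuinely different route from the paper. The paper works at the level of laws: it applies Monge--Kantorovich duality to the pair of functions satisfying $\phi(x)+\psi(y)\le|x-y|^2$, deduces $T_2(\nu_t,\mu)\le e^{-t}\mathrm{tr}(A_0)$ where $\nu_t$ is the law of $a_t$, and concludes $\nu_t\to\mu$ in $T_2$, hence weakly, hence $a_\infty\sim\mu$. You instead prove the stronger pathwise statement that $\mu_t\to\delta_{a_\infty}$ weakly almost surely, and then close the argument by combining this with the martingale property $\EE\int\phi\,d\mu_t=\int\phi\,d\mu$ and bounded convergence. The key to your pathwise claim is upgrading the $L^1$ decay $\EE\,\mathrm{tr}(A_t)=e^{-t}\mathrm{tr}(A_0)$ to almost sure decay $\mathrm{tr}(A_t)\to 0$, which you do correctly: taking the trace of \eqref{diff-At} shows $e^t\,\mathrm{tr}(A_t)$ is a non-negative local martingale, hence a supermartingale, hence almost surely convergent to a finite limit, so $\mathrm{tr}(A_t)\to 0$ a.s.; Chebyshev then gives the weak convergence $\mu_t\to\delta_{a_\infty}$ along the convergent trajectory $a_t\to a_\infty$. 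Your approach uses only elementary martingale theory (no transport duality) and yields strictly more information (a.s.\ collapse of $\mu_t$ to a point mass), at the cost of being somewhat longer; the paper's route is shorter and gives a quantitative $T_2$-rate for the laws $\nu_t$, but says nothing pathwise about $\mu_t$ itself. Your parenthetical remark about the legitimacy of the stochastic calculus (that $A_t$ stays positive definite on finite horizons so that $M$ is indeed a local martingale) is the right thing to flag and is covered by the construction in \cite{E1}.
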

\begin{proof}
Let $\phi,\psi$ be functions on $\RR^n$ satisfying
\begin{equation}
\label{kantorovich}
\phi( x) + \psi(y) \leq \vert x-y \vert^2 , \quad x,y\in \RR^n . 
\end{equation}
Then 
\[
\phi ( a_t )  + \int_{\RR^n} \psi (y) \, \mu_t ( dy) \leq \int_{\RR^n} \vert a_t - y \vert^2 \, d y =  \mathrm{tr} ( A_t ) . 
\]
Taking expectation and using~\eqref{average} and~\eqref{elem-At} we obtain 
\[
\int_{\RR^n} \phi \,  d \nu_t  + \int_{\RR^n} \psi \, d \mu  \leq  \mathrm{tr} ( A_0 ) e^{-t} ,
\]
where $\nu_t$ is the law of $a_t$. This holds for every pair of functions 
satisfying the constraint~\eqref{kantorovich}. By the Monge-Kantorovich duality
(see for instance~\cite[Theorem~5.10]{V})
we obtain
\[
T_2 (  \nu_t , \mu ) \leq e^{-t }  \mathrm{tr} ( A_0)
\]
where $T_2$ is the transport cost associated to the Euclidean 
distance squared, defined in the introduction. Thus $\nu_t \rightarrow \mu$ in the $T_2$
sense, which implies that $a_t \rightarrow \mu$ in law, hence the result. 
\end{proof}
%
Let us move on to properties of the operator norm of $A_t$. 
We shall use the following lemma
which follows for instance from a theorem of Brascamp-Lieb~\cite[Theorem~4.1.]{BL}. 
We provide an elementary proof using the Pr\'{e}kopa-Leindler inequality. 
\begin{lemma}
\label{bl}
Let $X$ be a random vector on $\RR^n$ whose density $\rho$ has the form
$$
\rho(x) = \exp \left ( - \frac{1}{2} \langle Bx, x \rangle - V(x) \right )
$$
where $B$ is a positive definite matrix, and  $V \colon \RR^n \to (-\infty + \infty] $ is a convex function.
Then one has,
\[
\mathrm{cov} ( X ) \leq B^{-1}.
\]
In other words, if a random vector $X$ is more log-concave than a Gaussian vector $Y$,
then $\mathrm{cov} ( X ) \leq \mathrm{cov} ( Y )$.
\end{lemma}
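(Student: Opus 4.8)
The plan is to prove $\mathrm{cov}(X) \le B^{-1}$ by testing against an arbitrary direction $\theta \in \Sph$; that is, it suffices to show $\EE \langle X - \EE X, \theta\rangle^2 \le \langle B^{-1}\theta, \theta\rangle$ for every unit vector $\theta$. By an affine change of variables we may reduce to the case $B = \mathrm{id}$: writing $x = B^{-1/2} y$, the density of $Y = B^{1/2} X$ has the form $\exp(-\tfrac12 |y|^2 - \tilde V(y))$ with $\tilde V$ convex, and the claimed inequality becomes $\mathrm{cov}(Y) \le \mathrm{id}$, i.e. $\mathrm{var}(\langle Y, \theta\rangle) \le 1$ for all $\theta \in \Sph$. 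So it is enough to treat $B = \mathrm{id}$, and by rotational invariance of the Gaussian part it is even enough to bound the variance of the first coordinate $Y_1$.

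First I would set up the Pr\'ekopa--Leindler argument for the variance of a single coordinate. Fix the direction $e_1$ and let $g$ be the density of $Y_1$, which is log-concave (as a marginal of a log-concave density) and of the form $g(s) = \exp(-\tfrac12 s^2 - W(s))$ — here one needs the fact that integrating out the Gaussian-plus-convex density in the remaining $n-1$ variables produces, after factoring out $e^{-s^2/2}$, a log-concave function of $s$, which is exactly the one-dimensional Pr\'ekopa--Leindler (Pr\'ekopa's theorem). Thus the problem reduces to the scalar statement: if $g(s) = \exp(-\tfrac12 s^2 - W(s))$ is a probability density on $\RR$ with $W$ convex, then $\mathrm{var}_g \le 1$. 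This scalar fact I would prove directly: translating so that $\EE_g[s]=0$, I want $\int s^2 g(s)\,ds \le 1$. The clean way is a second application of Pr\'ekopa--Leindler / a direct convexity computation — for instance, comparing $g$ with the standard Gaussian $\gamma$ via the function $s \mapsto \int s^2 g(s)\,ds$ and using that $g/\gamma = e^{-W}$ is log-concave, so that $g$ and $\gamma$ cross at most twice, which combined with $\int g = \int \gamma = 1$ and a sign/moment argument forces $\int s^2 g \le \int s^2 \gamma = 1$.

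Alternatively, and perhaps more in the spirit of "an elementary proof using Pr\'ekopa--Leindler," I would argue as follows. Consider the function $h(\lambda) = \log \int_{\RR^n} \exp(-\tfrac12 |x - \lambda\theta|^2 - V(x))\,dx$ for $\lambda \in \RR$; by Pr\'ekopa's theorem $h$ is concave (it is the log of a marginal of the log-concave function $(x,\lambda)\mapsto \exp(-\tfrac12|x-\lambda\theta|^2 - V(x))$). A direct computation of $h''(0)$ in terms of the measure $X$ gives $h''(\lambda) = \mathrm{var}_{\mu_\lambda}(\langle X,\theta\rangle) - 1$, where $\mu_\lambda$ is the tilted measure $\propto \exp(-\tfrac12|x-\lambda\theta|^2 - V(x))$ and $\mu_0 = $ law of $X$; concavity $h'' \le 0$ at $\lambda = 0$ then yields $\mathrm{var}(\langle X,\theta\rangle) \le 1$ immediately. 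This is the slickest route: it packages everything into one application of Pr\'ekopa's theorem plus a routine differentiation under the integral sign.

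The main obstacle is the differentiation-under-the-integral step: one must justify that $h$ is twice differentiable and compute $h''$ correctly, which requires the integrability afforded by the Gaussian factor and by log-concavity (exponential decay of all the relevant densities and their moments). This is routine but must be done carefully to make the proof rigorous. The other point requiring a word of care is the reduction to $B = \mathrm{id}$: one should check that the affine substitution genuinely preserves the hypothesis (the quadratic form $\tfrac12\langle Bx,x\rangle$ becomes $\tfrac12|y|^2$ exactly, and $V(B^{-1/2}y)$ stays convex), and that proving the covariance inequality in every direction $\theta$ is equivalent to the matrix inequality $\mathrm{cov}(X) \le B^{-1}$, which is just the definition of the Loewner order. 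None of these steps is deep; the content of the lemma is entirely carried by Pr\'ekopa's theorem.
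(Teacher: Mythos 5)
Your second, "slicker" route is essentially the paper's proof in different clothing: the paper defines $\Lambda(a)=\log\EE e^{\langle a,X\rangle}$, applies the two-function Pr\'ekopa--Leindler inequality with weights $1/2,1/2$ to obtain the midpoint inequality $\tfrac12\Lambda(a)+\tfrac12\Lambda(-a)-\Lambda(0)\le\tfrac12|a|^2$, and then Taylor-expands at $0$ using $\nabla^2\Lambda(0)=\mathrm{cov}(X)$; your $h(\lambda)=\log\int\exp(-\tfrac12|x-\lambda\theta|^2-V(x))\,dx$ equals $-\tfrac12\lambda^2+\Lambda(\lambda\theta)+\mathrm{const}$, so concavity of $h$ (from Pr\'ekopa's theorem) and evaluating $h''(0)$ is the same calculation, just packaged through the marginal theorem rather than through a hand-picked triple $(e^f,e^g,e^h)$. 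The regularity you worry about for $h''(0)$ is the same regularity the paper needs for $\nabla^2\Lambda(0)$ and the limit $a\to0$, so neither route is analytically cheaper. Your first route --- project to one dimension via Pr\'ekopa's theorem, center, and then run a Chebyshev-type sign argument --- is a genuinely different and even more elementary alternative; to make it airtight you should spell out the sign argument, e.g.\ note that after centering, $g-\gamma$ is nonnegative exactly on an interval $[\alpha,\beta]$ (since $g/\gamma=e^{-W}$ with $W$ convex and $\int g=\int\gamma$ rules out a single sign), and then test against the quadratic $p(s)=(s-\alpha)(s-\beta)$: $p\cdot(g-\gamma)\le0$ pointwise while $\int p(g-\gamma)=\int s^2(g-\gamma)$ because $\int(g-\gamma)=\int s(g-\gamma)=0$, giving $\int s^2 g\le\int s^2\gamma=1$. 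Both of your approaches are correct; the one the paper chose is closest to your second, and the advantage of your first is that it avoids any differentiation under the integral at the cost of a slightly longer case analysis.
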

\begin{proof}
There is no loss of generality assuming that $B = \mathrm{id}$ (replace $X$ by $B^{1/2} X$ otherwise). 
Let 
\[
\Lambda \colon  x \mapsto \log \EE ( e^{\langle x,X \rangle } )  .
\]
Since log-concave vectors have exponential moment $\Lambda$ is $\mathcal C^{\infty}$
in a neighborhood of $0$ and it is easily seen that
\begin{equation}
\label{step-lemma-bl}
\nabla^2 \Lambda (0) = \mathrm{cov} ( X) .  
\end{equation}
Fix $a\in \RR^n$ and define
\[
\begin{split}
f \colon x &\mapsto \langle a , x \rangle - \frac 1 2 \vert x\vert^2 - V(x) , \\
g \colon y & \mapsto  -\langle a , y \rangle - \frac 1 2 \vert y \vert^2 - V(y)  ,\\
h \colon z& \mapsto  - \frac 1 2 \vert z \vert^2 - V(z)  . 
\end{split}
\]
Using the inequality 
\[
 \frac 12 \langle a , x -y  \rangle  - \frac 1 4 \vert x\vert^2   - \frac 1 4 \vert y \vert^2  \
 \leq \frac 12 \vert a \vert^2 - \frac 1 8 \vert x+y \vert^2 , 
 \]
 and the convexity of $V$ we obtain 
\[
\frac 12 f (x) + \frac 12 g(y) \leq  \frac 1 2 \vert a \vert^2  + h \bigl( \frac{x+y} 2 \bigr)  , \quad \forall x,y\in \RR^n .
\] 
Hence by Pr\'ekopa-Leindler
\[
\Bigl( \int_{\RR^n} e^{f(x)} \, dx \Bigl)^{1/2}
\Bigl( \int_{\RR^n} e^{g(y)} \, dy \Bigl)^{1/2} \leq 
e^{\vert a\vert^2 /2 } \, \int_{\RR^n} e^{h(z)} \, dz . 
\]
This can be rewritten as
\[
\frac 1 2 \Lambda ( a)  + \frac 1 2 \Lambda (-a) - \Lambda (0 ) \leq \frac 12 \vert a \vert^2 .
\]
Letting $a$ tend to $0$ we obtain $\langle \nabla^2 \Lambda (0) a , a\rangle \leq \vert a \vert^2$
which, together with~\eqref{step-lemma-bl}, yields the result. 
\end{proof}
Integrating~\eqref{diff-ft} shows that the density of the measure $\mu_t$ satisfies
\begin{equation}
\label{integ-ft}
f_t (x ) = f (x) \exp \Bigl( c_t  +  \langle b_t , x \rangle - \frac 1 2 \langle B_t x , x \rangle \Bigr) 
\end{equation}
where $c_t,b_t$ are some random processes, and 
\begin{equation}
\label{def-Bt}
B_t = \int_0^t A_s^{-1} \, ds .
\end{equation}
\begin{lemma}
\label{bl-app}
If the initial measure $\mu$
is more-log-concave than the
standard Gaussian measure, 
then almost surely
\[
\Vert  A_t \Vert_{op} \leq e^{-t}  , \quad \forall t \geq 0 . 
\]
%
\end{lemma}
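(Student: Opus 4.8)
The plan is to combine the representation \eqref{integ-ft}--\eqref{def-Bt} of the density $f_t$ with Lemma~\ref{bl}, and then to feed the resulting matrix inequality into a Gr\"onwall-type differential argument.

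First I would fix a realization of the Brownian motion $(W_s)_{s\ge 0}$ and use the hypothesis, which says that the initial density is $f(x)=\exp\bigl(-\frac12\vert x\vert^2-V(x)\bigr)$ for some convex $V$, to rewrite \eqref{integ-ft} as
\[
f_t(x)=\exp\Bigl(-\frac12\langle(\mathrm{id}+B_t)\,x,x\rangle-\widetilde{V}_t(x)\Bigr),
\]
where $\widetilde{V}_t(x)=V(x)-\langle b_t,x\rangle-c_t$ is convex (an affine perturbation of $V$) and $\mathrm{id}+B_t$ is positive definite, since $B_t=\int_0^t A_s^{-1}\,ds$ is positive semidefinite. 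As $f_t$ is the density of $\mu_t$, Lemma~\ref{bl} --- a deterministic statement, applied here for each fixed realization --- yields $A_t=\mathrm{cov}(\mu_t)\le(\mathrm{id}+B_t)^{-1}$. Both sides being positive definite, inversion reverses the order, so
\[
A_t^{-1}\ge \mathrm{id}+B_t,\qquad t\ge 0,
\]
almost surely: the inequality holds for each fixed $t$, hence for all rational $t$ simultaneously, hence for all $t\ge 0$ by continuity of $t\mapsto A_t$.

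Next I would convert this into the announced bound. By \eqref{def-Bt} the map $t\mapsto B_t$ is $\mathcal C^1$ with $B_0=0$ and $\frac{d}{dt}B_t=A_t^{-1}\ge \mathrm{id}+B_t$. Setting $\Phi_t=e^{-t}(\mathrm{id}+B_t)$ one obtains
\[
\frac{d}{dt}\Phi_t=e^{-t}\bigl(\tfrac{d}{dt}B_t-\mathrm{id}-B_t\bigr)\ge 0
\]
in the positive semidefinite order. Testing against an arbitrary vector $v$, the scalar function $t\mapsto\langle\Phi_t v,v\rangle$ is nondecreasing, whence $\Phi_t\ge\Phi_0=\mathrm{id}$, i.e. $\mathrm{id}+B_t\ge e^t\,\mathrm{id}$. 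Combined with $A_t^{-1}\ge\mathrm{id}+B_t$ this gives $A_t^{-1}\ge e^t\,\mathrm{id}$, equivalently $A_t\le e^{-t}\,\mathrm{id}$, which is precisely $\Vert A_t\Vert_{op}\le e^{-t}$.

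The points requiring care are the pathwise application of Lemma~\ref{bl} to the random measure $\mu_t$, the order reversal under matrix inversion (which needs $A_t$ positive definite, as it is here, being a nondegenerate covariance matrix), and running the Gr\"onwall estimate in a non-commutative setting --- which is why I work with the quadratic forms $\langle\Phi_t v,v\rangle$ rather than manipulating the matrices directly. I do not expect a genuine obstacle: once \eqref{integ-ft} and Lemma~\ref{bl} are in hand the lemma reduces to a one-line matrix differential inequality, and the Gaussian case, where $A_t=e^{-t}\,\mathrm{id}$, shows that the estimate is sharp.
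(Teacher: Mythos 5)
Your proof is correct and follows essentially the same route as the paper: both apply Lemma~\ref{bl} to the representation~\eqref{integ-ft} of $f_t$ to get a covariance bound on $A_t$ in terms of $B_t$, and then run a Gr\"onwall argument using $\frac{d}{dt}B_t = A_t^{-1}$. The only (cosmetic) difference is that the paper first scalarizes by passing to the lowest eigenvalue of $B_t$ and the operator norm of $A_t$ and integrates a scalar differential inequality, whereas you keep the full matrix inequality $A_t^{-1}\ge\mathrm{id}+B_t$ and run the Gr\"onwall estimate at the level of quadratic forms; both are valid and reach the same conclusion.
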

\begin{proof}
Let $\lambda_t$ be the lowest eigenvalue of $B_t$.  Define $Y$ to be the Gaussian random vector whose convariance matrix is
\[
\frac 1 { \lambda_t + 1 } \, \mathrm{id}  . 
\]
Then~\eqref{integ-ft} and the hypothesis show that the density of $\mu_t$ with respect to the law of $Y$ is 
log-concave. Therefore, by the previous lemma, the covariance matrix of $\mu_t$
satisfies
\[
A_t   \leq \frac 1 { \lambda_t + 1 } \, \mathrm{id} ,
\]
hence
\[
\Vert A_t  \Vert_{op} \leq \frac 1 { \lambda_t + 1} . 
\]
On the other hand, the equality~\eqref{def-Bt} yields
\[
\lambda_t \geq \int_0^t \Vert A_s\Vert_{op}^{-1} \, ds , 
\]
showing that 
\[
\int_0^t \Vert A_s  \Vert_{op}^{-1} \, ds +1 \leq \Vert A_t \Vert_{op}^{-1} .   
\]
Integrating this differential inequality yields the result. 
\end{proof}
The following proposition will be crucial for the proof of our main theorem. Its proof is more involved than the proof of previous estimate, and we refer to~\cite[Section 3]{E1}.
\begin{proposition}
\label{main-tool}
If the initial measure $\mu$ is log concave then
\[
\EE \Vert A_t \Vert_{op} \leq C_0 \Vert A_0 \Vert_{op} \tau_n^2 \log ( n ) \, e^{-t} , \quad \forall t \geq 0 ,
\]
where $C_0$ is a universal constant. 
\end{proposition}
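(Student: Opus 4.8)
The plan is to control $\EE \Vert A_t \Vert_{op}$ by comparing the operator norm to a trace-like quantity that is more amenable to the stochastic calculus, and then to invoke the thin-shell/$\tau_n$ parameter through the It\^o differential of the covariance matrix~\eqref{diff-At}. The naive approach — writing $d \Vert A_t \Vert_{op}$ directly — fails because the operator norm is not smooth, so instead I would work with a smooth surrogate. A natural choice is $\EE \,\mathrm{tr}(A_t^p)^{1/p}$ for a well-chosen exponent $p \asymp \log n$, since $\Vert A_t \Vert_{op} \leq \mathrm{tr}(A_t^p)^{1/p} \leq n^{1/p} \Vert A_t \Vert_{op}$, and the factor $n^{1/p}$ is then a universal constant. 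So the real target becomes a bound of the form $\EE \,\mathrm{tr}(A_t^p) \leq (C_0 \Vert A_0 \Vert_{op} \tau_n^2)^p e^{-pt}$ (up to adjusting constants), which would give the proposition after taking $p$-th roots.

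The core computation is to apply It\^o's formula to $\mathrm{tr}(A_t^p)$ using~\eqref{diff-At}. The drift term $-A_t\, dt$ in $dA_t$ contributes $-p\,\mathrm{tr}(A_t^p)\,dt$, which is exactly the source of the desired exponential decay $e^{-pt}$. The martingale part contributes nothing in expectation, but the quadratic variation of the martingale part, coming from the term $\int (x-a_t)^{\otimes 2} \langle A_t^{-1/2}(x-a_t), dW_t\rangle \mu_t(dx)$, produces a second-order It\^o correction. This correction is where $\tau_n$ enters: the quadratic covariation involves sums of the form $\sum_{i,j} \EE_{\mu_t}\bigl( (x-a_t)_i (x-a_t)_j \langle A_t^{-1/2}(x-a_t),\theta\rangle\bigr)^2$, which is precisely the quantity whose supremum over isotropic log-concave laws defines $\tau_n^2$ — after renormalizing $\mu_t$ to be isotropic via the map $x \mapsto A_t^{-1/2}(x-a_t)$, one picks up a factor $\Vert A_t\Vert_{op}$ (or $A_t$ in the matrix sense) times $\tau_n^2$. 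Putting the pieces together, the differential inequality should read roughly
\[
\frac{d}{dt}\,\EE\,\mathrm{tr}(A_t^p) \leq -p\,\EE\,\mathrm{tr}(A_t^p) + C p^2 \tau_n^2 \,\EE\bigl(\Vert A_t\Vert_{op}\,\mathrm{tr}(A_t^p)\bigr)\cdot(\text{something}),
\]
and the $\log n$ in the final bound comes from the choice $p \asymp \log n$ multiplying $\tau_n^2$ after one carefully tracks how the $p^2$ from the It\^o correction interacts with taking $p$-th roots.

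The main obstacle I anticipate is bookkeeping the matrix-valued quadratic variation correctly: the second-order term in It\^o's formula for $\mathrm{tr}(A_t^p)$ is not simply $\mathrm{tr}(A_t^{p-2} \,d[A]_t)$ because $A_t$ and its differential do not commute, so one must use the integral representation of the derivative of $X \mapsto \mathrm{tr}(X^p)$ (a sum $\sum_{k} \mathrm{tr}(A^{k}\,(dA)\,A^{p-2-k}\,(dA))$) and estimate each piece, using $A_t \geq 0$ and cyclicity of the trace to bound everything by $\Vert A_t\Vert_{op}$-weighted versions of $\mathrm{tr}(A_t^{p-1})$ times the relevant $\tau_n^2$ factor. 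A secondary subtlety is controlling the $\Vert A_t \Vert_{op}$ factor that appears inside the It\^o correction: one can absorb it crudely using $\Vert A_t\Vert_{op} \leq \mathrm{tr}(A_t) \leq$ (via~\eqref{elem-At}, in expectation) $\,n\Vert A_0\Vert_{op}e^{-t}$, or more sharply by a bootstrapping/Gr\"onwall argument on $\EE\,\mathrm{tr}(A_t^p)$ itself, which is presumably what~\cite[Section 3]{E1} does; since the proposition is quoted from there, I would present the computation at the level of the differential inequality and defer the delicate Gr\"onwall step to the reference.
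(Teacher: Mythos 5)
The paper does not actually give a proof of Proposition~\ref{main-tool}: the authors state it and immediately defer to~\cite[Section 3]{E1}. Your sketch is consistent with the approach taken there. In particular you correctly identify the three main mechanisms: (a) controlling the operator norm via the Schatten-type surrogate $\mathrm{tr}(A_t^p)^{1/p}$ with $p \asymp \log n$, for which $n^{1/p}$ is $O(1)$; (b) applying the matrix It\^o formula to $\mathrm{tr}(A_t^p)$, with the drift $-A_t\,dt$ in~\eqref{diff-At} producing the $-p\,\mathrm{tr}(A_t^p)\,dt$ term and hence the factor $e^{-t}$ after $p$-th roots; and (c) the second-order It\^o correction being where $\tau_n^2$ enters, through the normalized third moments of $\mu_t$ after the change of variables $y = A_t^{-1/2}(x-a_t)$, with the extra factors of $A_t$ absorbed against powers of $\|A_t\|_{op}$. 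You also correctly flag the non-commutativity subtlety in the second derivative of $X\mapsto \mathrm{tr}(X^p)$, which requires the full sum $\sum_k \mathrm{tr}(A^k\, dA\, A^{p-2-k}\,dA)$ rather than a naive $\mathrm{tr}(A^{p-2}(dA)^2)$ term.

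Two small remarks. First, the ``target bound'' you write, $\EE\,\mathrm{tr}(A_t^p) \leq (C_0\|A_0\|_{op}\tau_n^2)^p e^{-pt}$, would give no $\log n$ at all after taking $p$-th roots; as you say yourself a paragraph later, the $\log n$ actually emerges from the interaction of the $p$ (respectively $p^2$) factors in the It\^o correction with the Gr\"onwall-type closing of the differential inequality, and there is a genuine self-referential difficulty since the error term contains $\|A_t\|_{op}$, the quantity one is trying to bound. Second, the passage from $\EE\,\mathrm{tr}(A_t^p)$ to $\EE\|A_t\|_{op}$ needs Jensen: $\EE\|A_t\|_{op} \le \EE\,\mathrm{tr}(A_t^p)^{1/p} \le (\EE\,\mathrm{tr}(A_t^p))^{1/p}$. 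These are exactly the delicate steps that you defer to~\cite{E1}, which is the same position the paper itself takes.
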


\section{Proof of the main theorem}

We start with an elementary lemma. 
\begin{lemma} \label{conclemma}
Let $X$ be a log-concave random vector in $\RR^n$ and let $\Vert \cdot \Vert$ be a norm. 
Then for any event $F$
\[
\EE\bigl( \Vert X\Vert ; \, F \bigr) \leq C_1 \sqrt{\PP( F)} \, \EE\bigl( \Vert X\Vert \bigr) ,
\]
where $C_1$ is a universal constant. 
In particular, if  $\PP(F) \leq ( 2 C_1)^{-2}$, one has
\begin{equation} \label{conceq}
\EE\bigl( \Vert X\Vert \bigr) \leq 2 \EE\bigl( \Vert X\Vert ; \, F^c \bigr) ,
\end{equation}
where $F^c$ is the complement of $F$. 
\end{lemma}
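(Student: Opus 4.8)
The plan is to prove the moment-type inequality $\EE(\Vert X\Vert;F)\le C_1\sqrt{\PP(F)}\,\EE\Vert X\Vert$ via a second-moment bound: it suffices to show $\EE(\Vert X\Vert^2)\le C_1^2(\EE\Vert X\Vert)^2$ for a universal constant $C_1$, since then Cauchy--Schwarz gives
\[
\EE(\Vert X\Vert;F)\le \sqrt{\EE(\Vert X\Vert^2)}\,\sqrt{\PP(F)}\le C_1\,\EE\Vert X\Vert\,\sqrt{\PP(F)}.
\]
So the heart of the matter is a reverse H\"older (Khinchine-type) inequality for the random variable $\Vert X\Vert$, where $X$ is log-concave. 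This is a standard fact: for a log-concave random vector, any seminorm $g(x)=\Vert x\Vert$ is itself a log-concave function composed with $X$, and more to the point the real-valued random variable $Z=\Vert X\Vert$ has a log-concave distribution on $[0,\infty)$ (the push-forward of a log-concave measure under a norm is log-concave, by Pr\'ekopa--Leindler / the fact that marginals and more generally images under linear-type maps of log-concave measures are log-concave — here one uses that $\{x:\Vert x\Vert\le t\}$ is convex). For such $Z\ge 0$ one has the reverse H\"older inequality $\EE(Z^2)\le C(\EE Z)^2$ with a universal $C$; equivalently all $L^p$ norms of $Z$ are comparable.

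Concretely, the steps I would carry out are: (1) Observe $Z=\Vert X\Vert$ is a nonnegative random variable whose law is log-concave on $\RR$ — cite Borell, or give the one-line Pr\'ekopa--Leindler argument that $t\mapsto \PP(\Vert X\Vert\le t)$ has log-concave density. (2) Invoke the standard estimate that a log-concave random variable $Z\ge0$ satisfies $(\EE Z^p)^{1/p}\le C p\,\EE Z$ for all $p\ge1$, or at the very least $\EE Z^2\le C_1^2 (\EE Z)^2$; this is classical (e.g. Borell's lemma, or Milman--Pajor). (3) Apply Cauchy--Schwarz as above to get the first inequality. (4) For the "in particular" clause, write $\EE\Vert X\Vert=\EE(\Vert X\Vert;F)+\EE(\Vert X\Vert;F^c)$, bound the first term by $C_1\sqrt{\PP(F)}\,\EE\Vert X\Vert\le \tfrac12\EE\Vert X\Vert$ using $\PP(F)\le(2C_1)^{-2}$, and rearrange to get $\EE\Vert X\Vert\le 2\EE(\Vert X\Vert;F^c)$.

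The only real subtlety — and the step I would flag as the main obstacle, though it is routine once stated correctly — is justifying that $\Vert X\Vert$ has a log-concave law, because $\Vert\cdot\Vert$ need not be a norm in the symmetric sense (the paper allows asymmetric gauges) and because one must be careful that the push-forward of a log-concave measure under a positively-homogeneous convex function is log-concave on the line. This follows since for each $t$ the sublevel set $\{\Vert x\Vert\le t\}$ is convex, so $\PP(\Vert X\Vert\le t)$ is a log-concave function of $t$ by Pr\'ekopa's theorem (it is the integral of a log-concave density over a set that grows convexly in $t$), hence its density is log-concave. After that, everything is a black-box application of the reverse H\"older inequality for log-concave densities on $\RR$, and the constants $C_1$ and $2$ come out as stated.
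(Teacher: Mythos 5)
Your argument is correct, and it reaches the conclusion by a slightly different route than the paper. You apply Cauchy--Schwarz at the level of random variables, $\EE(\Vert X\Vert;F)\le\sqrt{\EE\Vert X\Vert^2}\sqrt{\PP(F)}$, and then invoke the reverse H\"older (Khinchine) inequality $\EE\Vert X\Vert^2\le C^2(\EE\Vert X\Vert)^2$ for log-concave vectors and (semi)norms. The paper instead writes $\EE(\Vert X\Vert;F)=\int_0^\infty\PP(\Vert X\Vert>t,\,F)\,dt$, bounds the integrand by $\sqrt{\PP(\Vert X\Vert>t)}\sqrt{\PP(F)}$ (via $\min(a,b)\le\sqrt{ab}$), and controls $\int_0^\infty\sqrt{\PP(\Vert X\Vert>t)}\,dt$ by plugging Borell's exponential tail bound directly. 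Both proofs are ultimately powered by Borell's lemma --- the reverse H\"older you use is itself derived from it --- so the two arguments are cousins rather than genuinely independent; yours is a touch more black-box, the paper's a touch more self-contained. Your treatment of the ``in particular'' clause is exactly the paper's.

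One small caveat on your justification of the Khinchine step: the assertion that the \emph{density} of $\Vert X\Vert$ is log-concave is not quite what Pr\'ekopa gives you. Pr\'ekopa applied to the jointly log-concave function $(t,x)\mapsto f(x)\mathbf 1_{\{\Vert x\Vert\le t\}}$ shows that the \emph{distribution function} $t\mapsto\PP(\Vert X\Vert\le t)$ is log-concave, which is weaker than log-concavity of its derivative, and the pushforward of a log-concave measure under a nonlinear map such as a norm is not automatically log-concave. This does not affect the proof, because the inequality $\EE\Vert X\Vert^p\le(Cp)^p(\EE\Vert X\Vert)^p$ is a direct consequence of Borell's lemma (which you also cite) and does not require log-concavity of the law of $\Vert X\Vert$; but the Pr\'ekopa argument as you phrased it over-claims slightly.
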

\begin{proof}
This is an easy consequence of Borell's lemma, which states as follows. 
There exist universal constants $C,c>0$ such that,
$$
\PP\Bigl( \Vert X\Vert > t \EE\bigl( \Vert X\Vert \bigr) \Bigr) \leq C e^{- c  t }.
$$
By Fubini's theorem and the Cauchy-Schwarz inequality
$$
\EE \bigl( \Vert X\Vert ; \, F \bigr) =   \int_0^\infty \PP\bigl(\Vert X\Vert > t, \, F \bigr) \, dt 
\leq \Bigl( \int_0^\infty \sqrt{\PP\bigl( \Vert X\Vert > t \bigr) } \, dt \Bigr) \times  \sqrt{\PP(F)}.
$$
Plugging in Borell's inequality yields the result, with constant $C_1 = 2C/ c$. 
\end{proof}
The next ingredient we will need is the following proposition, 
which we learnt from B.Maurey (\cite{M}). The authors are not aware of any published similar result. 
\begin{proposition} \label{Maurey}
Let $(M_t)_{t\geq 0}$ be a continuous martingale
taking values in $\RR^n$. Assume that $M_0 =0$ 
and that the quadratic variation 
of $M$ satisfies
\[
\forall t >0 , \quad [M]_t \leq \mathrm{id} ,
\]
almost surely. Then $(M_t)_{t\geq 0}$ converges almost surely, and the limit satisfies the following inequality. 
Letting $\Gamma$ be a standard Gaussian vector, we
have for every convex function $\varphi \colon \RR^n \to \RR\cup\{+\infty\}$
\[
\EE \varphi ( M_\infty ) \leq  \EE  \varphi ( \Gamma ) .
\]
\end{proposition}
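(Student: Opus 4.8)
The plan is to prove Proposition~\ref{Maurey} by a combination of a stopping-time argument (to handle convergence of $M$) and an interpolation/Girsanov-type computation comparing $M_\infty$ with $\Gamma$. First I would observe that the hypothesis $[M]_t \leq \mathrm{id}$ for all $t$ implies that $M$ is bounded in $L^2$: indeed $\EE |M_t|^2 = \EE \mathrm{tr}([M]_t) \leq n$, so by Doob's theorem $M_t$ converges a.s.\ and in $L^2$ to some $M_\infty$. This takes care of the convergence claim. The real content is the convex-ordering inequality $\EE \varphi(M_\infty) \leq \EE \varphi(\Gamma)$.

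For the inequality, I would run the heat flow / Brownian interpolation. Represent $\Gamma = B_1$ where $(B_s)_{s\in[0,1]}$ is a standard Brownian motion independent of $M$, and for a smooth bounded convex $\varphi$ consider the function $u(t,x) = \EE\,\varphi(x + \sqrt{1-t}\,\Gamma)$ for $t\in[0,1]$, which solves the backward heat equation $\partial_t u = -\tfrac12 \Delta u$ with $u(1,\cdot) = \varphi$. The key point is that $u(t,\cdot)$ is convex in $x$ (heat semigroup preserves convexity), so its Hessian $\nabla^2 u(t,x)$ is positive semidefinite. Now apply It\^o's formula to the process $s \mapsto u(A_s, M_s)$ for a suitable time change $s \mapsto A_s$ with $A_0 = 0$, $A_\infty = 1$; since $[M]$ has total mass at most $\mathrm{id}$ we can reparametrise so that the "clock" $A_s$ increases at rate controlled by $d[M]_s$. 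Then
\[
d\,u(A_s, M_s) = \langle \nabla u, dM_s\rangle + \tfrac12 \langle \nabla^2 u, d[M]_s\rangle - \tfrac12 \Delta u \, dA_s,
\]
and the drift term is $\tfrac12\langle \nabla^2 u(A_s,M_s),\, d[M]_s - \mathrm{id}\, dA_s\rangle$. Choosing the clock so that $\mathrm{id}\,dA_s - d[M]_s \geq 0$ (legitimate because $[M]$ is absolutely continuous with derivative $\leq \mathrm{id}$, so one can take $dA_s$ to be, say, the derivative of $\tfrac{1}{n}\mathrm{tr}[M]$ times an appropriate rescaling — or more carefully, introduce an extra independent Brownian component to make up the deficit) makes this drift nonpositive, using $\nabla^2 u \geq 0$. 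Taking expectations and letting $s\to\infty$ gives $\EE\varphi(M_\infty) = \EE\, u(A_\infty, M_\infty) \leq u(0,0) = \EE\varphi(\Gamma)$.

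I expect the main obstacle to be the rigorous bookkeeping of the time change: $[M]_s$ need not be absolutely continuous in general, and its derivative, where it exists, is a matrix $\leq \mathrm{id}$ but with varying rank, so there is no single scalar clock that simultaneously "catches up" with $[M]$ in every direction. The clean fix is to enlarge the probability space: let $(\tilde B_t)$ be an independent standard Brownian motion and set $N_t = M_t + \int_0^t C_s \, d\tilde B_s$ where $C_s C_s^{\mathsf T} = \mathrm{id} - \frac{d[M]_s}{dt}$ (defining $[M]$'s derivative via its absolutely continuous part and arguing the singular part only helps); then $[N]_\infty \leq \mathrm{id}$ with the total clock running exactly to $\mathrm{id}$ at some finite-or-infinite time, $N_\infty = M_\infty + (\text{independent mean-zero term})$, and $\EE\varphi(M_\infty) \leq \EE\varphi(N_\infty)$ by Jensen conditionally on $\mathcal F^M_\infty$. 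Then the It\^o argument applies to $N$ with a genuine drift-free comparison. A standard approximation (mollify $\varphi$, truncate, monotone/dominated convergence) extends from smooth bounded $\varphi$ to arbitrary convex $\varphi\colon\RR^n\to\RR\cup\{+\infty\}$, and one should also localise via stopping times $\tau_k = \inf\{t : |M_t| \geq k\}$ to justify that the local martingale term has zero expectation before passing to the limit.
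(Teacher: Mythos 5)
Your convergence argument is exactly the paper's, and the central idea --- complete the quadratic variation of $M$ to the identity by adding independent Gaussian noise, then invoke convexity --- is also the same. Where you diverge is in \emph{how} you add the missing noise: you propose a continuous-time completion $N_t = M_t + \int_0^t C_s\, d\tilde B_s$ with $C_s C_s^{\mathsf T} = \mathrm{id} - \frac{d[M]_s}{dt}$, combined with a backward-heat-equation / It\^o interpolation. This runs into the obstacle you yourself flag: $[M]$ need not be absolutely continuous, and the parenthetical ``the singular part only helps'' does not actually close the gap --- if $[M]$ has a singular part $S_t$ then your $N$ has $[N]_t = t\,\mathrm{id} + S_t$, which overshoots $\mathrm{id}$ rather than matching it, so $N$ is not a Brownian motion on $[0,1]$ and $N_1$ is not standard Gaussian. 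The paper's proof sidesteps all of this by adding the deficit \emph{in one shot at the terminal time}: take $X$ a standard Gaussian independent of $M$ and set $Y = M_\infty + (\mathrm{id} - [M]_\infty)^{1/2} X$. That $Y$ is standard Gaussian is verified directly from the characteristic function, using the fact that for $L_t = \langle M_t, x\rangle$ the complex exponential $D_t = \exp(iL_t + \tfrac12[L]_t)$ is a \emph{bounded} martingale (the hypothesis $[M]_t \leq \mathrm{id}$ is precisely what makes $|D_t| \leq e^{|x|^2/2}$). No time change, no regularity of $[M]$, no PDE, and no mollification of $\varphi$ is needed. Finally, instead of your conditional-Jensen step, the paper uses a small symmetrization trick: setting $Z = M_\infty - (\mathrm{id}-[M]_\infty)^{1/2}X$ (also standard Gaussian), one has $M_\infty = \tfrac12(Y+Z)$, whence $\EE\varphi(M_\infty) \leq \tfrac12\EE\varphi(Y)+\tfrac12\EE\varphi(Z) = \EE\varphi(\Gamma)$ for any convex $\varphi$, with no smoothness or boundedness assumptions. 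Your approach would work if you first time-changed to make $[M]$ absolutely continuous, but you should be aware that the terminal-time completion is both simpler and fully rigorous as written.
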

\begin{proof}
The hypothesis implies that $M$ is bounded in $L^2$,
hence convergent by Doob's theorem.
Let $X$ be a standard Gaussian vector on $\RR^n$
independent of $(M_t)_{t\geq 0}$. 
We claim that
\[
Y = M_\infty + ( \mathrm{id} -[M]_\infty )^{1/2} X 
\]
is also a standard Gaussian vector. 
Indeed, for a fixed $x\in \RR^n$ one has
\[
\begin{split}
\EE \left ( e^{i \langle x , Y\rangle } \mid (M_t)_{t\geq 0}  \right )
& = \exp \left ( i  \langle x , M_\infty \rangle +  \frac 1 2 \langle [M]_\infty x , x \rangle - \frac 1 2 \vert x\vert^2 \right ) \\
& = \exp \left ( i L_\infty +  \frac 1 2  [L]_\infty - \frac 1 2 \vert x\vert^2 \right ) , 
\end{split}
\]
where $L$ is the real martingale defined by $L_t = \langle M_t , x \rangle$. 
It\^o's formula shows that 
\[  D_t = \exp \left ( i L_t + \frac 1 2 [L]_t  \right ) \]
is a local martingale. On the other hand the hypothesis yields
\[
\vert D_t \vert =   \exp  \left ( \frac 1 2 \langle [M]_t x , x \rangle \right ) \leq \exp \left  ( \frac 12 \vert x \vert^2 \right  ) 
\]
almost surely. This shows that $(D_t)_{t\geq 0}$ is a bounded martingale; in particular 
\[ \EE ( D_\infty ) = \EE ( D_0 ) = 1 , \]
since $M_0 =0$. Therefore
\[
\EE \bigl( e^{i \langle x , Y\rangle }  \bigr) = e^{- \vert x\vert^2 /2} ,
\]
proving the claim. Similarly (just replace $X$ by $-X$)
\[
Z  =  M_\infty - ( \mathrm{id} -[M]_\infty )^{1/2} X 
\]
is also standard Gaussian vector. Now, given a convex function $\phi$, we have
\[
\EE \varphi ( M_\infty ) = \EE \vphi \left (  \frac {Y + Z} {2} \right )
\leq \frac 12  \EE \left ( \varphi ( Y ) + \varphi ( Z ) \right ) = \EE \varphi ( Y ) ,
 \]
which is the result. 
\end{proof}
We are now ready to prove the main theorem.
\begin{proof}[Proof of Theorem~\ref{mainthm}]
Let us prove that given a norm $\Vert \cdot \Vert$ and
a log-concave vector $X$ satisfying $\EE ( X ) = 0$ we have 
\begin{equation}
\label{main-result2}
\EE \Vert X \Vert  \leq C \tau_n (\log n)^{1/2} \,   \Vert \mathrm{cov} ( X ) \Vert_{op}^{1/2} \,
 \EE \Vert \Gamma \Vert , 
\end{equation}
for some universal constant $C$. 
If $X$ is assumed to be isotropic, then $\mathrm{cov} ( X ) = \mathrm{id}$
and we end up with the desired inequality~\eqref{main-result1}.
\\
Our first step is to reduce the proof to the case that $X$ has a compact support. Assume that \eqref{main-result2} holds for such vectors, and for $r >0$, let $Y_r$ be a random vector distributed
according to the conditional law of $X$ given the event $\{ \vert X \vert \leq r\}$. 
Then $Y_r$ is a compactly supported log-concave vector, and by our assumption,
\begin{equation}
\label{stepY}
\EE \Vert Y_r - \EE (Y_r) \Vert  \leq C \tau_n ( \log n )^{1/2}   \Vert \mathrm{cov} ( Y_r ) \Vert_{op}^{1/2} \, 
\EE \Vert \Gamma \Vert  .
\end{equation}
Besides, it is easily seen by dominated convergence that
\[
\begin{split}
\lim_{r \rightarrow +\infty}  \EE \Vert Y_r - \EE Y_r \Vert & = \EE \Vert X \Vert  ,  \\
\limsup_{r \rightarrow +\infty}  \Vert \mathrm{cov} ( Y_r ) \Vert_{op}  & \leq \Vert \mathrm{cov} (X)  \Vert_{op}.
\end{split}
\]
So letting $r$ tend to $+\infty$ in~\eqref{stepY} yields~\eqref{main-result2}. 
Therefore, we may continue the proof under the assumption that $X$ is compactly supported.

We use the stochastic process $(\mu_t)_{t\geq 0}$ defined in the beginning of the previous section, with the starting law $\mu$ being the law of $X$.  \\
Let $T$ be the following stopping time:
\[
T = \inf \Bigl( t \geq 0 , \, \int_0^t A_s \, ds > C^2  \tau_n^2  \log n \, \Vert A_0 \Vert_{op}  \Bigr) , 
\]
where $C$ is a positive constant to be fixed later 
and with the usual convention that $\inf (\emptyset ) = + \infty$. 
Define the stopped process $a^T$ by
\[
( a^T )_t =  a_{ \min ( t, T ) }  . 
\]
By the optional stopping theorem, this process is also a martingale and by definition of $T$
its quadratic variation satisfies
\[
[ a^T ]_t \leq  C^2  \tau_n^2  \log n \, \Vert A_0 \Vert_{op}   , \quad \forall t \geq 0  . 
\]
Also $(a^T)_0 = a_0 = \EE ( X) = 0$. Applying Proposition \ref{Maurey} 
we get
\begin{equation}
\label{step-aT}
\EE  \Vert a_T \Vert  = \EE \Vert (a^T)_\infty \Vert  \leq  C \tau_n (\log n)^{1/2} \,   \Vert A_0 \Vert_{op}^{1/2} \, \EE  \Vert \Gamma \Vert .
\end{equation}
On the other hand, using Proposition~\ref{main-tool} and Markov inequality we get
\[
\PP ( T < +\infty ) =  \PP \left ( \int_0^\infty \Vert A_s \Vert_{op} \, ds >  C^2  \tau_n^2  \log n \, \Vert A_0 \Vert_{op}  \right ) 
\leq \frac{ C_0 }{ C^2 } .  
\]
So $\PP ( T < + \infty )$ can be rendered arbitrarily small by choosing $C$ large enough. By Proposition~\ref{samedist} we have $a_\infty = X$ in law;
in particular $a_\infty$ is log-concave. If $\PP ( T <+\infty )$ is small 
enough, we get using Lemma~\ref{conclemma}
\[
\begin{split}
\EE \Vert X \Vert = \EE \Vert a_\infty \Vert 
& \leq  2 \EE \bigl(  \Vert a_\infty \Vert ; \, T =  \infty \bigr) \\
& = 2 \EE \bigl(  \Vert a_T \Vert ; \, T =  \infty \bigr)   \leq 2  \EE   \Vert a_T \Vert  . 
\end{split}
\]
Combining this with~\eqref{step-aT} and recalling that $A_0 = \mathrm{cov} ( X )$ we obtain
the result~\eqref{main-result2}.
\end{proof}
The proof of Proposition~\ref{harge} follows the same lines. The main difference is that
Proposition~\ref{bl-app} is used in lieu of Proposition~\ref{main-tool}.
\begin{proof}[Proof of Proposition~\ref{harge}]
Let $Y_r$ b a random vector distributed 
according to the conditional law of $X$ given $\vert X \vert \leq r$. 
Then $Y_r$ is also more log-concave than $\Gamma$ and 
\[
\EE \varphi ( Y_r ) \rightarrow \EE \varphi ( X ) 
\]
as $r\rightarrow +\infty$. So again we can assume that $X$ is compactly supported,
and consider the process $(\mu_t)_{t\geq 0}$ starting from the law of $X$. 
\\
By Lemma~\ref{bl-app}, the process $(a_t)_{t\geq 0}$ is a martingale whose quadratic variation satisfies
\[
[a]_t = \int_0^t A_s \, ds \leq \mathrm{id}, \quad \forall t \geq 0 ,
\]
almost surely. Since again $a_0 = \EE ( X) = 0$, Proposition~\ref{Maurey} yields the result. 
\end{proof}
\begin{remark}
This proof is essentially due to Maurey; although his (unpublished) argument relied on a different stochastic construction. 
\end{remark}
\section{Application to Mean Width and to the Isotropic Constant}
In this section, we prove Corollary~\ref{corslicing}.

Let $\Gamma$ be a standard Gaussian vector in $\RR^n$ and let $\Theta$ be a point uniformly distributed in $\Sph$.
Integration in polar coordinates shows that for any norm $\Vert \cdot \Vert$, 
\[
\EE \Vert \Gamma \Vert = c_n \EE \Vert \Theta \Vert  ,
\]
where 
\[
c_n = \EE \vert \Gamma \vert = \sqrt{n} + O ( 1) ,
\]
since $\Gamma$ has the thin-shell property. 
Theorem~\ref{mainthm} can thus be restated as follows. 
If $X$ is an isotropic log-concave random vector and $K$ is a convex body 
containing $0$ in its interior then
\begin{equation}
\label{main-result3}
\EE \Vert X \Vert_K \leq C \sqrt{n \log n} \, \tau_n\,  M ( K ) . 
\end{equation}
Now let $K$ be an isotropic convex body and let $X$ be a random vector uniform on $K$. 
Then $\PP ( \Vert X \Vert_K \leq 1/2 ) = 1/ 2^n \leq 1/2$, so that by Markov inequality
\[
\EE \Vert X \Vert_K \geq \frac{1}{2} \PP \left ( \Vert X \Vert_K \geq \frac 1 2 \right ) \geq \frac{1}{4}.
\]
Inequality~\eqref{main-result3} becomes
\[
M (K) \geq \frac c { \sqrt{n \log n } \,  \tau_n } ,  
\]
proving~\textit{(i)}.
\\
Since $X \in K$ almost surely, we have $\Vert X \Vert_{K^\circ} \geq \vert X \vert^2$, hence
\[
\EE \Vert X \Vert_{K^\circ} \geq \EE  \vert X \vert^2 = n . 
\]
Applying~\eqref{main-result3} to $K^\circ$ thus gives
\[
M^* ( K ) \geq \frac{c \sqrt{n}}{ \sqrt{\log n} \, \tau_n } , 
\]
which is~\textit{(ii)}.
\\
In~\cite{Bou}, Bourgain combined the inequality
\begin{equation}
\label{bourgain}
\EE \Vert X \Vert \leq C n^{1/4} \EE \Vert \Gamma \Vert 
\end{equation}
with a theorem of Pisier to get the estimate
\[
L_K \leq C n^{1/4} \log n . 
\]
Part~\textit{(iii)} of the corollary is obtained 
along the same lines, replacing~\eqref{bourgain}
by our main theorem. We sketch the argument for
completeness. 
\\
Recall that $K$ is assumed to be isotropic
and that $X$ is uniform on $K$. Let $T$ be
a positive linear map of determinant $1$. 
Then by the arithmetic-geometric inequality
\[
\EE \Vert X \Vert_{( T K )^\circ} \geq \EE \langle X , TX \rangle = \mathrm{tr} ( T ) \geq n .
\]
Applying~\eqref{main-result3} to $(TK)^\circ$ we get
\begin{equation} \label{mstareq}
M^* ( TK ) \geq  \frac{ c\sqrt{n} } { \sqrt{ \log n} \,   \tau_n } .
\end{equation}
Now we claim that given a convex body $K$
containing $0$ in its in interior, there exists 
a positive linear map $T$ of determinant $1$ such that
\begin{equation}
\label{step-pisier}
M^* ( TK ) \leq  C  \vert K \vert^{1/n} \sqrt{n}  \log n . 
\end{equation}
Taking this for granted and combining it with~\eqref{mstareq} we obtain
\[
\vert K \vert^{-1/n} \leq C'  ( \log n)^{3/2} \tau_n.
\]
which is part (iii) of the corollary. 
\\
It remains to prove the claim~\eqref{step-pisier}. Clearly 
\[
M^*(K) \leq M^* ( K-K ) ,
\]
and by the Rogers-Shephard inequality (see~\cite{RS})
\[
\vert K - K \vert \leq 4^n \vert K \vert . 
\]
This shows that it is enough to prove the claim when $K$ is symmetric about the 
origin. Now if $K$ is a symmetric convex body in $\RR^n$,
Pisier's Rademacher-projection estimate
together with a result of Figiel and Tomczak-Jaegermann 
(see e.g. \cite[Theorem~2.5 and Theorem~3.11]{P})
guarantee the existence of $T$ such that 
\[
M(TK) M^*(TK) \leq C  \log (n) ,
\]
where $C$ is a universal constant.
This, together with Urysohn's inequality
\[
M(TK) \geq \Bigl( \frac{ \vert B_2^n \vert }{\vert TK \vert}  \Bigr)^{1/n } \geq  \frac c { \sqrt{n} \vert K \vert^{1/n} } ,
\]
yields~\eqref{step-pisier}. 
%

\end{document}